\author{Rafael Torres and Jonathan Yazinski}
\title[On the type and number of type change loci]{On the number of type change loci of a generalized complex structure}
\address{Mathematical Institute - University of Oxford\\ 24-29 St Giles\\Oxford\\OX1 3LB\\England}
\email{torres@maths.ox.ac.uk}
\address{McMaster University - Dept. Mathematics and Statistics\\ 1280 Main Street West\\L8S 4K1\\Hamilton, ON.}
\email{yazinski@math.mcmaster.ca}
\keywords{Generalized complex structures, type change loci.}
\subjclass[2010]{Primary 53C15, 53D18; Secondary 53D05, 57M50}
\theoremstyle{plain}
\newtheorem{theorem}[equation]{Theorem}
\newtheorem{proposition}[equation]{Proposition}
\newtheorem{remark}[equation]{Remark}
\theoremstyle{definition}
\newtheorem{definition}[equation]{Definition}
\newcommand{\R}{\mathbb{R}}
\newcommand{\Z}{\mathbb{Z}}
\newcommand{\N}{\mathbb{N}}
\newcommand{\C}{\mathbb{C}}
\begin{document}

\maketitle

\emph{Abstract}: In this note, we describe a procedure to construct generalized complex structures with an arbitrarily large number of type change loci on products of the circle with a connected sum of closed 3-manifolds. The loci need not be isotopic.\\

\section{Introduction}

Generalized complex structures \cite{[H], [G]} are a simultaneous generalization of complex and symplectic structures that are obtained via the search for complex structures on Courant algebroids over a smooth manifold. Generalized complex geometry \cite{[H], [G]} provides an unifying language for several seemingly different features of symplectic and complex geometries. Important examples of such common themes are the generalized complex analogues for the $(p, q)$-decomposition of forms from complex geometry, and the analogues for symplectic and Lagrangian submanifolds.\\

A fundamental feature of these structures, is that the type of a generalized complex structure in dimension greater than two is not necessarily constant. It can jump along a codimension two submanifold called the \emph{type change locus}. In dimension four, the type jumps from a symplectic type to a complex one along a 2-torus, which inherits the structure of an elliptic curve. This was shown to be a generic phenomena in \cite{[G], [CG1], [CG2]}, where structures with at most one type change locus were built. This kind of generalized complex structures can be described as symplectic structures that acquire a singularity along a codimension two submanifold.\\

Recent constructions of inequivalent smooth structures on 4-manifolds with small topological invariants \cite{[Go], [FPS], [BK], [BK2]} were used in \cite{[T]} to produce examples of generalized complex structures with several type change loci on a myriad of 4-manifolds by applying the cut-and-paste operations \cite{[Lu], [ADK], [CG1], [CG2]}. Gualtieri and Cavalcanti asked the first author of this note the natural question: \emph{do generalized complex structures with arbitrarily many type change loci exist?} This question was answered in the affirmative in \cite{[T1]}  (see Section \ref{Section CC}) and independently in \cite{[GH]}.\\

In this note, we describe a procedure in Section \ref{Section 4} (cf. \cite{[FPS], [BK], [T]}) to construct generalized complex structures with as many type change loci as you want, as long as you want at least one, on product manifolds \begin{equation} S^1\times M^3,\end{equation} where $M^3$ is a closed 3-manifold formed as a connected sum of manifolds with a variety of choices of fundamental group. The following result exemplifies the procedure, as well as the choice of 3-manifold $M^3$. We denote the connected sum of $t$ copies of a manifold $M$ by $\# t\cdot M$, and a surface of genus $g$ by $\Sigma_g$; in particular, $\Sigma_0 = S^2$ and $\Sigma_1 = T^2$. The Lens spaces considered are $L(p_i, 1)$, and we assume $p_i\geq 2$ for $i\in \N$; in particular, $L(2, 1)$ is the real projective 3-space $\mathbb{RP}^3$.

\begin{theorem}{\label{Theorem 1}} Let $a\in \{0, 1\}$, and let $b, c$ be nonnegative integers. The almost-complex 4-manifold
\begin{equation} S^1\times (S^3\#(\#a\cdot S^1\times \Sigma_g)\#(\# b\cdot S^1\times S^2) \# (\# c\cdot L(p_i, 1)))\end{equation} %\begin{equation} E(g)_h
%\end{equation}

admits a generalized complex structure $\mathcal{J}_n$ with $n\in \N$ type change loci.

%Moreover, the type change loci need not be homologically equivalent.
\end{theorem}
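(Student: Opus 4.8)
Since $M^3$ is a closed orientable $3$-manifold it is parallelizable, hence so is $S^1\times M^3$; in particular this $4$-manifold admits an almost-complex structure for free, so the real content is a generalized complex structure whose type jumps along exactly $n$ disjoint codimension-two submanifolds. The plan is to create the type change loci by the Cavalcanti--Gualtieri surgery of \cite{[CG1], [CG2]}, which replaces a tubular neighborhood $T^2\times D^2$ of a symplectic $2$-torus of zero self-intersection by a local model that is of symplectic type off the core torus and of complex type along it; each such surgery contributes precisely one type change locus and modifies the smooth type only by a logarithmic transformation along that torus. The integer $n$ will then be the number of tori receiving this model, while the diffeomorphism type $S^1\times M^3$ is arranged by a disjoint family of type-preserving torus surgeries that act only on the underlying $3$-manifold.

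First I would fix the symplectic model. The product $T^2\times S^2=S^1\times(S^1\times S^2)$ is symplectic and contains the family of pairwise disjoint parallel symplectic tori $T^2\times\{q_1\},\dots,T^2\times\{q_n\}$, each of self-intersection zero. Applying the Cavalcanti--Gualtieri surgery to these $n$ tori produces a generalized complex structure with exactly $n$ disjoint type change loci. This already settles the case $a=c=0$, $b=1$, and isolates the mechanism behind the ``at least one locus'' in the statement.

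Next I would realize the prescribed diffeomorphism type. I would present $M^3$ as the result of Dehn surgery on a framed link $\mathcal{L}=\gamma_1\cup\cdots\cup\gamma_k\subset S^3$: a $0$-framed unknot for each $S^1\times S^2$ summand, a $p_i$-framed unknot for each Lens space $L(p_i,1)$, and a standard fibered link presenting the single $S^1\times\Sigma_g$ summand when $a=1$. After crossing with $S^1$, each $\gamma_j$ gives a torus $S^1\times\gamma_j$, and the $S^1$-invariant torus surgery along it (a logarithmic transform, of the appropriate multiplicity for the Lens-space components, in the spirit of \cite{[Lu], [ADK]}) reproduces $S^1\times(\text{Dehn surgery on }\gamma_j)$ in the $3$-manifold direction. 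Performing all of them simultaneously converts the $S^1\times S^2$ factor of the model into $S^1\times M^3$. The crux of the assembly is to arrange the surgery tori $S^1\times\gamma_j$ to be symplectic, of square zero, and disjoint both from one another and from the $n$ loci tori, so that these topology-changing surgeries carry no type change while only the $n$ chosen tori receive the Cavalcanti--Gualtieri model; the total count is then exactly $n$.

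The main obstacle is precisely this simultaneous symplectic realization together with the identification of the smooth result on the nose. One must produce a single symplectic form, on the complement of the loci, for which every surgery torus is simultaneously symplectic, disjoint, and of zero self-intersection; this is what forces the fibered and Seifert-fibered presentations, so that the relevant curves cross with $S^1$ to sections or multisections of a torus fibration. More delicate still is the arithmetic of \emph{which} Dehn slopes can be realized by symplectic (type-preserving) torus surgeries: I expect the restrictions in the hypotheses—at most one $S^1\times\Sigma_g$ summand and Lens spaces only of the form $L(p_i,1)$—to be exactly the range in which the required slopes are symplectically realizable and mutually disjoint. Granting this, a Kirby-calculus (equivalently, mapping-torus) computation must confirm that the composite of surgeries yields $S^1\times M^3$ itself rather than a merely homeomorphic model, and a local normal-form check must confirm that the type-preserving surgeries introduce no extra locus while each Cavalcanti--Gualtieri surgery introduces exactly one. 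Verifying finally that the almost-complex structure underlying $\mathcal{J}_n$ is the expected one completes the argument.
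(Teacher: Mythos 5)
There is a genuine gap at the foundation of your plan: the Cavalcanti--Gualtieri surgery is not topologically invisible, so your model case is already false. A $(p,q,0)$-torus surgery reglues $T^2\times D^2$ so that the meridian maps to the curve $pS^1_{\alpha}+qS^1_{\beta}$ lying on the torus direction; consequently the surgery curve is killed in $\pi_1$ of the result (its image now bounds a disk --- this is precisely the point of Remark \ref{Observation 1} in the paper). Performing one such surgery on $T^2\times\{q_1\}\subset T^2\times S^2$ along the curve $y$ yields $S^1\times S^3$, not $T^2\times S^2$: the complement $T^2\times S^2-\nu(T)$ is itself $T^2\times D^2$, and the regluing identifies the new meridian with the longitude $S^1_y$, producing $S^1_x\times(\mbox{solid torus}\cup\mbox{solid torus})=S^1\times S^3$. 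Iterating on the remaining parallel tori changes $\pi_1$ further and produces connected sums; it never returns $T^2\times S^2$. Hence the claim that this ``already settles the case $a=c=0$, $b=1$'' fails, and with it the general architecture: you cannot decouple ``type-changing but topology-preserving'' surgeries from ``topology-changing but type-preserving'' ones, because the multiplicity-zero log transform always changes the topology. A secondary confusion points the same way: the type-preserving surgeries ($r=1$) of Luttinger/Auroux--Donaldson--Katzarkov require \emph{Lagrangian} tori, while it is the type-changing surgeries ($r=0$) that require \emph{symplectic} tori; your plan asks the topology-changing, type-preserving surgeries to be performed on symplectic tori, which is not what the cited results provide.

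The paper resolves exactly this tension by making the type-changing surgeries do double duty. One starts with $T^2\times\Sigma_h$ with surplus genus, $h=(\mbox{summand data})+n$, whose homologically essential Lagrangian tori carry the generators $a_j,b_j$ of $\pi_1(\Sigma_h)$ as surgery curves. One first performs all the $r=1$ Luttinger surgeries, then perturbs the symplectic form so the unused tori become symplectic (Gompf's perturbation lemma), and finally performs $n$ surgeries with $r=0$: each of these simultaneously creates one type change locus \emph{and} kills one generator of the fundamental group, so that the net effect of all the surgeries reduces $T^2\times\Sigma_h$ to the desired $S^1\times M^3$ (Lens space summands arise from $(0,p,1)$-surgeries, which in the Dehn-surgery picture replace a $0$-framed unknot by a $p$-framed one, giving $L(p,1)$). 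The one part of your plan that does match the paper is the identification of the diffeomorphism type by viewing each torus surgery as $S^1\times(\mbox{Dehn surgery})$ and running three-dimensional Kirby calculus (Proposition \ref{Proposition D}); but this argument must be applied to the high-genus starting manifold, not grafted onto the false $T^2\times S^2$ model case.
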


The procedure described in Section \ref{Section 4} produces generalized complex structures on a much larger set of connected sums of 3-manifolds. The type change loci need not be homologically equivalent as discussed in Section \ref{Section 3}. There is an overlap between Theorem \ref{Theorem 1} and Proposition \ref{Proposition OV}, and \cite[Theorems 3.1 and 4.7]{[GH]}. 
It has been known for a couple of years \cite{[T1]} that, building on recent progress on 4-manifold topology \cite{[Go], [FPS], [BK]}, generalized complex structures with arbitrarily many type change loci can be constructed using only \cite{[Lu], [ADK], [CG1], [CG2]} by small tweaks to the proofs in \cite{[T]} (see Section \ref{Section CC} and Remark \ref{Remark 1}). These operations were generalized in \cite{[GH]} to construct generalized complex structures with arbitrarily many type change loci as well. Moreover, the main objective of this note as exemplified in Theorem \ref{Theorem 1} is to produce generalized complex structures on manifolds of the form $S^1\times M^3$, while the results \cite{[GH]} apply to broader generality (please see Remark \ref{Remark 1}).

\subsection{Acknowledgements} We thank Ron Stern for sharing his expertise, which resulted in Section \ref{Section 5.1} and motivated the writing of this note. We thank Ryushi Goto and Kenta Hayano for their interest in this paper and for their input, which helped us improved the manuscript. R. T. thanks Gil Cavalcanti, Ryushi Goto, Marco Gualtieri, Kenta Hayano, Nigel Hitchin, Paul Kirk, and Stefano Vidussi for useful conversations/e-mail exchanges. Support from the Simons Foundation is gratefully acknowledged.

%The simple construction procedure described in Section \ref{Section 4} that is used to prove Theorem \ref{Theorem 1} gives a canonical construction of generalized complex structures with arbitrary many type change loco on products of a 3-manifold and the circle. Item $(4)$ follows verbatim to the proof of \cite[Example 5.2]{[CG2]} by coupling different research directions in symplectic \cite{[FS2], [V1]} and generalized complex 4-manifolds  \cite{[CG1], [CG2]} as we explain in Example \ref{Example CG}. 

\section{Background}

\subsection{Type change loci of generalized complex structures}{\label{Section 2}} A generalized complex structure on a manifold is prescribed by its canonical bundle, whose definition we now recall following \cite{[G]}. Throughout the paper, we assume that $M$ is a closed smooth manifold.

\begin{definition} (cf. \cite[Definition 3.7]{[G]}). The \emph{canonical bundle} of a generalized complex structure $\mathcal{J}$ on the sum $TM\oplus T^{\ast}M$ is the line sub-bundle $K\subset \wedge^{\bullet} T^{\ast}_{\C}M$ that annihilates its $+i$-eigenspace.

The \emph{type} of a generalized complex structure $\mathcal{J}$ \cite[Definition 3.5]{[G]}  is the upper semi-continuous function
\begin{center}
type$(\mathcal{J}) = \frac{1}{2} dim_{\R} T^{\ast}M \cap \mathcal{J} T^{\ast}M$,
\end{center}

with possible values $\{0, 1, \ldots, n\}$, where $n = \frac{1}{2} dim_{\R} M$. 

\end{definition}

The type of $\mathcal{J}$ can be read off from the differential form that generates its canonical bundle $K$ as follows. A generator $\varphi \in K_x$ for the canonical line bundle at the point $x\in M$ is of the form \begin{equation} \varphi = e^{B + i \omega} \wedge \Omega,
\end{equation}

where $\Omega = \theta_1 \wedge \cdots \wedge \theta_k$ for a basis $\{\theta_1, \cdots, \theta_k\}$ for $L \cap T^{\ast}_{\C}M$, where $L$ is the $+i$-eigenbundle, and $B, \omega$ are the real and imaginary components of a complex 2-form. The type of $\mathcal{J}$ is given by the least nonzero degree $(k)$ of the differential form $\varphi$ \cite[Section 3.1]{[G]}\begin{center}
type$(\mathcal{J}) = deg (\Omega)$.
\end{center}

Although the type need not be constant and it may jump along closed submanifolds of codimension two  \cite{[G], [CG1], [CG2], [T], [T1], [GH]}, its parity does remain fixed throughout the manifold \cite[Section 4.1]{[G]}.

\subsection{Torus surgeries and geometric structures.}{\label{Section 3}}

Let $T \subset M$ be a 2-torus of self intersection zero inside a $4$-manifold, and let $\nu(T) \cong T^2\times D^2$ be its tubular neighborhood. Let $\{\alpha, \beta\}$ be the generators of $\pi_1(T)$ and consider the meridian $\mu_T$ of $T$ inside the complement $M - \nu(T)$, and push offs $S^1_{\alpha}, S^1_{\beta}$ of the generators $\{\alpha, \beta\}$ in $\partial \nu(T) \approx T^3$. There is no ambiguity regarding the choice of push offs, since in our constructions the manifold $M$ will be symplectic, the torus $T$ will be Lagrangian (or symplectic), and the push offs are taken with respect to the Lagrangian framing. The loops $S^1_{\alpha}$ and $S^1_{\beta}$ are homologous in $\nu(T)$ to $\alpha$ and $\beta$ respectively.\\

The manifold obtained from $M$ by performing a $(p, q, r)$-torus surgery on $T$ along the curve $\gamma:= p S^1_{\alpha} q S^1_{\beta}$ is defined as\begin{equation}M_{T, \gamma}(p, q, r) := (M - \nu(T)) \cup_{\varphi} (T^2 \times D^2),
\end{equation}

where the diffeomorphism $\varphi: T^2\times \partial D^2 \rightarrow \partial (M - \nu(T))$ used to glue the pieces together satisfies \begin{equation} \varphi_*([\partial D^2]) = p[S^1_{\alpha}] + q[S^1_{\beta}] + r[\mu_T]\end{equation} in $H_1(\partial (M - \nu(T))); \Z)$.\\

If $T$ is a Lagrangian torus, the manifold $M_{T, \gamma}(p, q, 1)$ admits a symplectic structure \cite{[Lu], [ADK]}. If the torus $T$ is a symplectic submanifold, then $M_{T, \gamma}(p, q, 0)$ admits a generalized complex structure with one type change locus \cite{[CG1]}. The type of the generalized complex structure $\mathcal{J}$ jumps from 0 to 2 along the core torus \begin{equation}\hat{T}:= T^2\times \{0\} \subset T^2\times D^2\end{equation} of each surgery. Such a torus is nullhomologous in $M_{T, \gamma}(p, q, 0)$.\\

Regarding the homology class of  the torus that acts as the type change locus we have the following. Assume the generalized complex structure has at least two type change loci $\hat{T}_i\subset X_T(p, q, 0)$ for $i = 1, 2$. Blowing up a point along a point on the submanifold $\hat{T}_1$, results in a generalized complex structure on $X_T(p, q, 0)\# \overline{\mathbb{CP}^2}$. Using the exceptional sphere introduced during the blow up, one sees that the homology class of the torus $\bar{T}_1= \hat{T}_1 \# \overline{\mathbb{CP}^1}$ is non-trivial, and it has self-intersection $-1$. The torus $\hat{T}_2$ is nullhomologous. In particular, the type change loci of a generalized complex structure need not represent the same homology class. 

\begin{remark}{\label{Observation 1}} Computations of fundamental groups of cut-and-paste constructions along submanifolds of codimension two are as challenging as they are essential when determining the topological type of the resulting manifold \cite{[BK]}. A $(p, q, 0)$-torus surgery carves out the torus $T$ and glues it back so that its meridian bounds a disk in $M_{T, \gamma}(p, q, 0)$. In particular, a generator of the group $\pi_1(M)$ is killed directly \cite{[T]}. Performing a $(p, q, 0)$-torus surgery allows us to skip these computations, and pin down directly the diffeomorphism type of $M_T(p, q, 0)$ as we do in the next section. Moreover, due to our goal of producing arbitrarily many type change loci, we will be generous with the number of these surgeries performed.
\end{remark}

\section{Surgical procedure and topological considerations.}{\label{Section 4}}

\subsection{Description of the construction and proof of Theorem \ref{Theorem 1}}{\label{Section 4.1}} The following procedure is adapted from \cite{[FPS], [BK]} to our purposes. Consider the manifold $T^2\times \Sigma_h$, and equip it with the product symplectic form $\pi_1^{\ast}\omega_{T^2}\oplus \pi_2^{\ast}\omega_{\Sigma_h}$, where $\pi_1$ and $\pi_2$ are the projections to the first and second factors respectively.  The tori $T_i$ involved in the surgeries are of the form \begin{equation}S^1\times S^1 \times \{pt\} \subset (S^1\times S^1)\times \Sigma_h = T^2\times \Sigma_h\end{equation}\begin{equation}{\label{Tori}}
\{pt\}\times S^1\times S^1 \subset S^1\times (S^1\times \Sigma_h) = T^2\times \Sigma_h,
\end{equation}

each of the corresponding surgery curves $\gamma_i$ carries a generator \begin{equation} y, a_1, b_1, \cdots, a_h, b_h\end{equation} 

of the group $\pi_1(T^2)\times \pi_1(\Sigma_h) \cong \pi_1(T^2\times \Sigma_h)$, where $\{x, y\}$ generate $\pi_1(T^2)$ and $\{a_i, b_i\}$ generate $\pi_1(\Sigma_g)$ as indicated in Figure 1. The loop $x$ will not be used as a surgery curve in any of our cut-and-paste constructions, and it generates the circle product factor in the resulting manifold $S^1\times M^3$.  In what follows we will abuse notation and denote by $x, y, a_i, b_i$ the loops that carry the generators of the group. The submanifolds $T_i$ are homologically essential Lagrangian tori, and the symplectic form can be perturbed \cite[Lemma 1.6]{[Go]} so that any of these tori becomes symplectic when needed. \\

\begin{figure}{\label{Figure 1}}
\begin{center}
\includegraphics[scale=0.20, viewport= 1550 80 450 600]{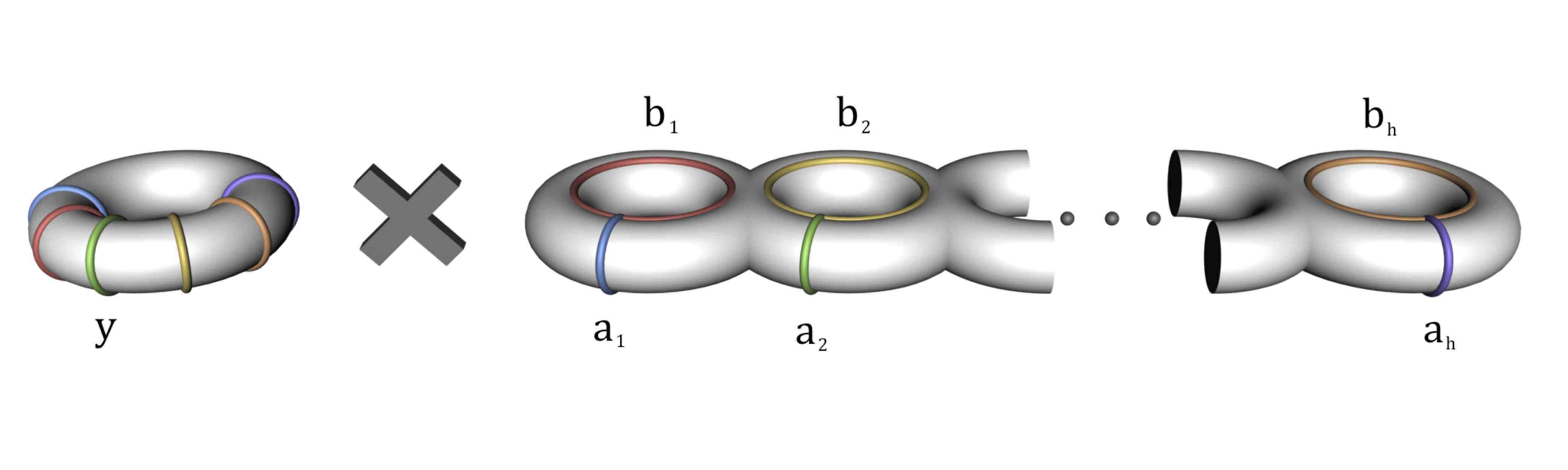}
\caption{Surgery curves and tori in $T^2\times \Sigma_h$}
\end{center}
\end{figure}

To fix ideas, and for the sake of clarity, we describe the construction of a generalized complex structure with an arbitrary number of type change loci on the manifolds \begin{equation}S^1 \times (S^3\#t\cdot S^1\times S^2)\end{equation} and \begin{equation}T^2\times \Sigma_g.\end{equation} We then explain the modifications on the argument required to construct all the manifolds of Theorem \ref{Theorem 1}. The choice of initial manifold to undergo surgery is $T^2\times \Sigma_h$, where the genus  of the surface is a function $h = h(a, b, c, n)$, where $a, b, c, n$ are as in Theorem \ref{Theorem 1}, that depends on the desired 3-manifold $M^3$, and on the number of type change loci one wants for the generalized complex structure to have. \\

$\bullet$ Let us construct a generalized complex structure $\mathcal{J}_n$ with $n\in \N$ type change loci on $T^2\times \Sigma_g$. Set $h = n + g$, and consider $T^2\times \Sigma_h$ as our starting manifold equipped with the symplectic structure $\pi^{\ast}\omega_{T^2} \oplus \pi^{\ast}\omega_{\Sigma_h}$. The group $\pi(T^2\times \Sigma_h)$ is generated by the loops \begin{equation} x, y, a_1, b_1, \cdots, a_g, b_g, a_{g + 1}, b_{g + 1}, \cdots, a_{g + n}, b_{g + n}.\end{equation}

We perform a total of 2n $(p, q, r)$-torus surgeries along tori $\{T_i\}$, each surgery curve $\gamma_i$ on the 2-torus $T_i$ given by one of the loops $a_{g + 1}, b_{g + 1}, \cdots, a_{g + n}, b_{g + n}$, to obtain a manifold diffeomorphic to $T^2\times \Sigma_g$. Setting either $(p, q) = (1, 0)$ or $(p, q) = (0, 1)$ according to the choice of $\gamma_i$, we perform $n$ surgeries with coefficient $r = 1$ \cite{[Lu], [ADK]}, and $n$ surgeries with coefficient $r = 0$ \cite{[CG1]}. Denote by $X(g, n)$ the resulting manifold, which admits a generalized complex manifold with $n$ type change loci by \cite{[CG1]}. We will prove in the next section that $X(g, n)$ is diffeomorphic to $T^2\times \Sigma_g$.\\

$\bullet$ We construct a generalized complex structure $\mathcal{J}_n$ with $n\in \N$ type change loci on $S^1 \times (S^3\#t\cdot S^1\times S^2)$. Set $h = t + n$. The group $\pi(T^2\times \Sigma_h)$ is generated by the loops \begin{equation} x, y, a_1, b_1, \cdots, a_t, b_t, a_{t + 1}, b_{t + 1}, \cdots, a_{t + n}, b_{t + n}\end{equation}
(see Figure 1). We choose the tori \ref{Tori} that have on them the surgery curves $\gamma_i = a_i$ for $i =1, \ldots, t + n$ and $\gamma_j = b_j$ for $j = t, \cdots, t + n$, and apply $(1, 0, r)$- and $(0, 1, 1)$-torus surgeries on them, respectively. Since we want a generalized complex structure $\mathcal{J}_n$ with $n$ type change loci, we will perform along the surgery curves $\gamma_i$ $n$ torus surgeries with coefficient $r = 0$, and $t$ torus surgeries with coefficient $r = 1$. First, we apply all the $(0, 1, 1)$- and $(1, 0, 1)$-torus surgeries, to obtain a symplectic manifold \cite{[ADK], [Lu]}. We then perturb the symplectic structure so that the unused tori become symplectic, and then apply the remaining surgeries with $r = 0$ \cite{[CG1]}. If $t = 0$, then we do a $(0, 1, 1)$-torus surgery along the curve $\gamma = y$. Denote by $M(t, n)$ the manifold obtained after the surgeries. We will prove in the following section that $M(t, n)$ is diffeomorphic to $S^1 \times (S^3\#t\cdot S^1\times S^2)$. \\

$\bullet$ The only remaining unaccounted diffeomorphism type in Theorem \ref{Theorem 1} corresponds to connected sums that include copies of Lens spaces $L(p_i, 1)$, and we proceed to explain their construction. Recall that the genus of the surface of the initial manifold is determined by $h = h(a, b, c, n)$, where $b$ is the number of copies of $S^1\times S^2$ and $c$ is the number of copies of Lens spaces. In the previous construction of $S^1\times (S^3\#t\cdot S^1\times S^2)$ some of the surgery loops $\gamma_i = b_i$ were not used during the surgeries, and they give rise to the $S^1\times S^2$ factors. To obtain the desired manifold, one can set $b\mapsto b + c$, and perform a $(p, 0, 1)$- or $(0, p, 1)$-surgery on a torus along the surgery curve with $p\geq 2$ \cite{[Lu], [ADK]}. As mentioned in Remark \ref{Observation 1} (cf. \cite{[T]}), after performing at least one surgery with $r=0$, an $(0, p, 1)$-surgery along the surgery loop $\gamma = b$ introduces the relation $b^p = 1$ to the fundamental group. In particular, such a surgery results in the final manifold containing a Lens space instead of a copy of $S^1\times S^2$ as we will see in the following section.

\begin{figure}{\label{Figure 3}}
\begin{center}
\includegraphics[scale=0.60, viewport= 110 50 460 700]{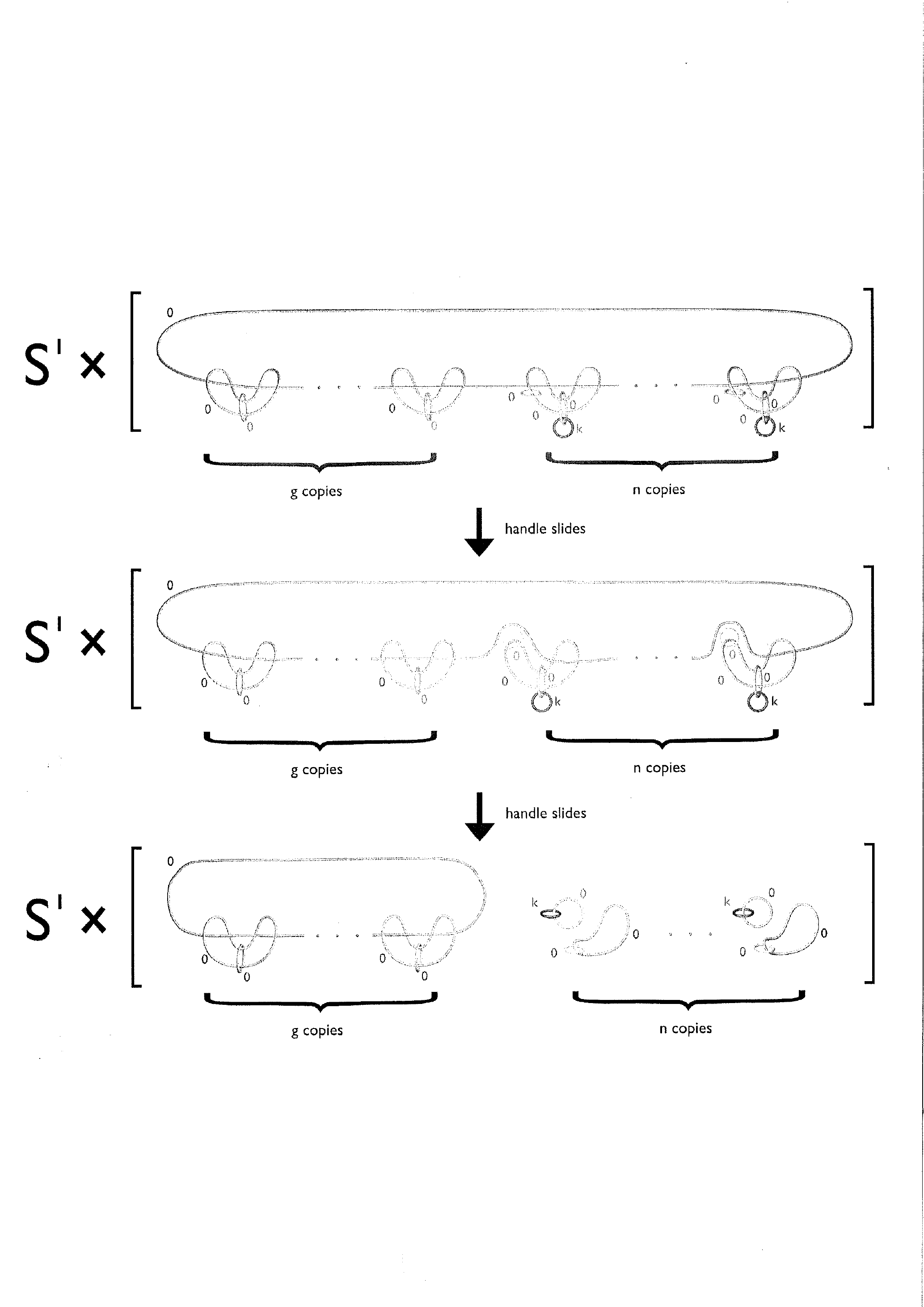}
\caption{Diffeomorphism with $S^1\times (S^1\times \Sigma_g)$}
\end{center}
\end{figure}

\begin{figure}{\label{Figure 2}}
\begin{center}
\includegraphics[scale=0.60, viewport= 110 50 460 700]{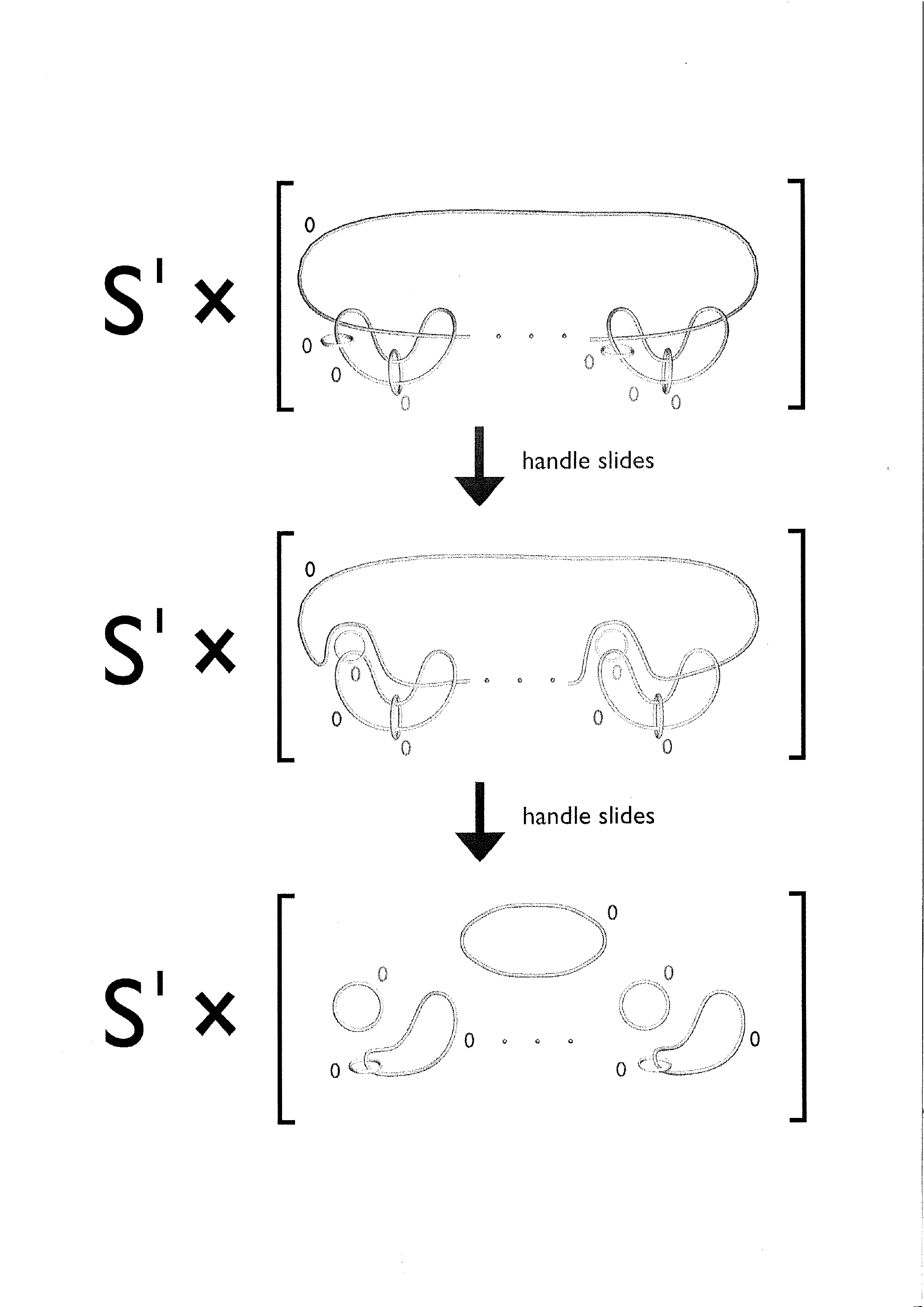}
\caption{Diffeomorphism with $S^1\times (\#t\cdot S^1\times S^2)$}
\end{center}
\end{figure}

\subsection{Diffeomorphisms.}{\label{Section 5.1}} In order to pin down the diffeomorphism types of the manifolds that were constructed in Section \ref{Section 3}, we use the property that a diffeomorphism $\phi: M_1^3\rightarrow M_2^3$ of 3-manifolds extends to a diffeomorphism of the product 4-manifolds $\hat{\phi}: S^1\times M_1^3 \rightarrow S^1\times M_2^3$. We are indebted to Ron Stern, from whom we learned the following argument. We think of the surgical tori of Section \ref{Section 4} as the product of a circle and a thick 2-torus
\begin{equation}
S^1\times (S^1\times D^2) \subset S^1\times (S^1\times \Sigma_h).
\end{equation}

The torus surgeries described in Section \ref{Section 3} are thought of as \begin{center}$S^1 \times$ Dehn surgery,\end{center}

and we restrict our attention to the effect that the Dehn surgeries have on the 3-dimensional factor $S^1\times \Sigma_h$ to conclude on the existence of the claimed diffeomorphism using three-dimensional link calculus \cite{[R], [K], [FRr]} (see \cite[Chapter VI]{[PS]} for a concise exposition). 

\begin{proposition}{\label{Proposition D}} Let $X(n)$ and $Y(n)$ be the manifolds constructed in Section \ref{Section 4}. There exist diffeomorphisms\begin{equation}{\label{Item 2}} \Psi': X(g, n)\longrightarrow S^1\times (S^1\times \Sigma_g)\end{equation}\begin{equation}{\label{Item 1}}
\Psi: M(t, n)\longrightarrow S^1\times (S^3\#(\#t\cdot S^1\times S^2))
\end{equation}
\end{proposition}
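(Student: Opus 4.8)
The plan is to reduce each four-dimensional diffeomorphism to a three-dimensional statement, exactly as the preamble to Proposition \ref{Proposition D} suggests. Since every torus surgery of Section \ref{Section 4} was performed on a torus of the product form $S^1 \times (S^1 \times D^2) \subset S^1 \times (S^1 \times \Sigma_h)$, with the circle factor carried by the loop $x$ which is never a surgery curve, each surgery is literally the product of the identity on the $x$-circle with a Dehn surgery on the three-manifold $S^1 \times \Sigma_h$. Because a diffeomorphism $\phi \colon M_1^3 \to M_2^3$ extends to $\hat\phi = \id_{S^1} \times \phi$ on the products, it suffices to produce three-dimensional diffeomorphisms
\begin{center}
$N(g,n) \longrightarrow S^1 \times \Sigma_g$ \quad and \quad $P(t,n) \longrightarrow S^3 \# (\#t \cdot S^1 \times S^2)$,
\end{center}
where $N(g,n)$ and $P(t,n)$ denote the three-manifolds obtained from $S^1 \times \Sigma_h$ by the corresponding Dehn surgeries on the curves $a_i, b_i$.

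First I would set up the link in $S^1 \times \Sigma_h$ explicitly: each surgery torus appears as a fibre circle over a point of $\Sigma_h$, so in the three-dimensional factor the surgery curves $a_{g+1}, b_{g+1}, \dots, a_{g+n}, b_{g+n}$ (respectively $a_i, b_j$ in the $M(t,n)$ case) become a disjoint union of circles, each a push-off of a generator, on which we perform framed Dehn surgery with the framings dictated by the coefficients $(p,q,r)$ restricted to the three-manifold. The $r=1$ surgeries restrict to integral (in fact $\pm 1$) Dehn surgeries that can be traded via the slam-dunk and Rolfsen-twist moves of Kirby calculus \cite{[R], [K], [FRr], [PS]}, while an $r=0$ surgery along a fibre curve amounts to the surgery that collapses the corresponding handle of $\Sigma_h$, reducing the genus by one. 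The key bookkeeping step is to verify that performing the $n$ surgeries with $r=0$ reduces $\Sigma_h = \Sigma_{n+g}$ to $\Sigma_g$ (so that the product becomes $S^1 \times \Sigma_g$), while in the second construction the $r=1$ surgeries on the $a_i$ and $b_j$ successively convert handles of the surface into the $S^1 \times S^2$ summands and an $S^3$, as recorded by the link-calculus diagrams in Figures \ref{Figure 3} and \ref{Figure 2}.

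I would carry this out by analyzing one pair $(a_i, b_i)$ of surgery curves at a time. A single handle of $\Sigma_h$ together with the surgeries performed on its two core loops contributes a local connected-sum factor to the three-manifold; after identifying that local factor (either a cancellation that lowers the genus, or the creation of an $S^1 \times S^2$, or an $S^3$), the global diffeomorphism assembles as a connected sum over the independent handles, since the surgery tori are mutually disjoint and live over distinct points of $\Sigma_h$. The Dehn-surgery description makes the framings and surgery slopes unambiguous because the push-offs are taken with respect to the Lagrangian framing, as recorded in Section \ref{Section 3}, so there is no framing indeterminacy to resolve.

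The main obstacle I anticipate is the Kirby-calculus verification itself, rather than the reduction: one must convert the intrinsic surgery description into a framed link in $S^3$ (or in a standard model), and then simplify it through a controlled sequence of handle slides, blow-downs, and slam-dunks to arrive at the target. The delicate point is tracking the framings correctly through the $r=0$ surgeries, since these are precisely the ones that do not yield a symplectic manifold and whose three-dimensional effect (a genus reduction) must be distinguished from the $r=1$ surgeries that build the $S^1 \times S^2$ and Lens-space summands. Establishing that the two families of surgeries commute up to diffeomorphism---so that one may perform the genus-reducing surgeries after the others, as the construction in Section \ref{Section 4} does---is the step I would treat with the most care, and it is where the explicit link diagrams of Figures \ref{Figure 3} and \ref{Figure 2} are indispensable.
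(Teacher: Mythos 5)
Your proposal follows essentially the same route as the paper's proof: reduce to three dimensions by viewing each torus surgery as $S^1\times(\text{Dehn surgery})$ on $S^1\times \Sigma_h$ --- using that the $x$-circle is never a surgery curve and that a diffeomorphism of 3-manifolds extends to the $S^1$-products --- and then identify the surgered 3-manifolds by link calculus, starting from the standard surgery diagram of $S^1\times\Sigma_h$ in $S^3$, adding the framed components corresponding to the torus surgeries, and simplifying handle-by-handle (pairs of linked components becoming $S^3$ summands, unknotted $0$-framed components becoming $S^1\times S^2$ summands), which is exactly what the paper carries out in Figures 2 and 3. The one place you are more cautious than necessary is the ``commuting of the two families of surgeries'': since the surgery tori are pairwise disjoint, the smooth type of the outcome does not depend on the order in which the surgeries are performed (the order matters only for the symplectic/generalized complex structures, not for the diffeomorphism type), so no additional argument is required there.
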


\begin{proof} Item \ref{Item 2}: The existence of the claimed diffeomorphism follows from the three dimensional Kirby calculus argument shown in Figure 2.  We begin with the standard diagram of $S^1\times \Sigma_{g + n}$ given by the top part of Figure 2 removing the small horizontal yellow 0-framed handles linked to the red 0-framed handles, and the lower vertical purple k-framed handles linked to the green 0-framed handles.  The manifold $S^1\times \Sigma_{g + n}$ is obtained by surgeries on the components of the link in $S^3$. The larger 0-framed horizontal handle represents the loop $y$, while the handles linked to it represent the loops $\{a_i\}$. The vertical 0-framed handles linked to the handles representing the $a_i$ loops represent the loops $\{b_i\}$. The first $g$ linked 0-framed handles arising from the loops $\{y, a_1, b_1, \cdots, a_g, b_g\}$ go untouched during the surgical procees, and will be the final $S^1\times \Sigma_g$ block \cite{[R]}. The surgeries on the loops $\{a_j, b_j: j = g + 1, \ldots, g + n\}$ are represented by attaching the aforementioned linked horizontal and vertical, respectively, as in the right part of the top diagram in Figure 2. After the indicated handleslides, we arrive to the bottom part of Figure 2, where each manifold obtain by surgery on a pair of linked handles is a 3-sphere. The claim now follows. 

Item \ref{Item 1}: We begin with the standard diagram of $S^1\times \Sigma_h$ given by the top part of Figure 3 with the small horizontal 0-framed handles, and the lower vertical 1-framed handles removed from the picture.  The larger 0-framed horizontal handle represents the loop $y$, while the handles linked to it represent the loops $\{a_i\}$. The vertical 0-framed handles linked to the handles representing the $a_i$ loops represent the loops $\{b_i\}$. The loops $\{y, b_i\}$ will provide the $S^1\times S^2$ factors on the connected sum: notice that the handlebody of $S^1\times S^2$ is an uknotted 0-framed circle \cite[Proposition 14.4]{[PS]}. Performing a $(1, 0, r)$-torus surgery along the surgery curve $\gamma_i = a_i$ is represented by attaching a vertical 0-framed handle to the diagram such that it links the surgery curve. The $(0, 1, 1)$-torus surgeries are indicated in the diagram as 1-framed handles linked to the vertical 0-framed handles that represent the loops $\{b_j\}$.

Perform the handle slides indicated in the diagram in the middle of Figure 3. In the bottom diagram of Figure 3, the linked circles with 0- or 1-framing represent  the connected sum of copies of the 3-sphere, while the $t$ unlinked 0-framed handles  represents $(\#t\cdot S^1\times S^2)$ \cite[Proposition 14.4]{[PS]}. This implies that the 3-manifold is diffeomorphic to $(\#t\cdot S^1\times S^2)$, and therefore $M(t, n) = S^1\times (\#t\cdot S^1\times S^2)$. Performing a $(0, 1, r)$-torus surgery along $\gamma = y$ amounts to adding a linked handle to the bigger horizontal 0-framed handle, and the same handle slides prove that the resulting manifold is diffeomorphic to $S^1\times S^3$.
\end{proof}

Finally, to conclude on the diffeomorphism type of the remaining cases of Theorem \ref{Theorem 1}, one argue as follows. The only remaining case of the connected sum of 3-manifolds are those that include Lens spaces. For this purpose we consider $L(p, 1)$ as obtained from performing surgery on the unknot in $S^3$ with framing $p$ or by surgery on a Hopf link with framings $p + 1$ and $1$ \cite[Chapter 9]{[FRr]}, \cite[Chapter VI]{[PS]}. The surgery described in the last paragraph of Section \ref{Section 4.1} changes the corresponding 0-framed unknots or Hopf links with framings 0 and k in Figure 1 or Figure 2 to a $p$-framed unknots or the Hopf links with the corresponding framings, respectively. A weaker statement on the existence of Lens spaces factors in the connectd sums of Theorem \ref{Theorem 1} is obtained by computing the fundamental group of the resulting manifold as done in \cite{[T]}, albeit this is not sufficient to conclude on the homeomorphism type of the Lens space. Indeed, recall that two Lens spaces $L(p, q_1)$ and $L(p, q_2)$ have cyclic fundamental group of order p, yet they are homeomorphic only if $q_1 = q_2$ mod $p$ \cite[Section 11.3]{[PS]} (see Remark \ref{Remark 1}). 

\subsection{Produce of coupling two areas of research}{\label{Section CC}}  The following proposition is two-fold, and it is a direct consequence of the recent outstanding progress on 4-manifold theory \cite{[Go], [FS2], [V1], [FPS], [BK], [BK2]}. First, we point out that existence of a myriad of examples of generalized complex structures with arbitrarily many type change loci is an immediate corollary of coupling constructions of generalized complex structures \cite{[CG1], [CG2]} with the study of existence of nonisotopic symplectic submanifolds $T_j \subset M$ representing the same homology class \cite{[FS2], [V1]}. Second, as mentioned in the introduction, we exemplify how such structures are constructed using a small tweak to the proofs in \cite{[T]}. Hence the overlap in the claims of the proposition.

\begin{proposition}{\label{Proposition OV}}(cf. \cite{[CG2], [T1], [GH]}). There exists a generalized complex structure $\mathcal{J}_k$ on a closed 4-manifold with $k$ type change loci. 

Moreover, such closed 4-manifold can be taken to be an elliptic surface $E(n)$ or every connected sum of copies of $\mathbb{CP}^2$ and $\overline{\mathbb{CP}^2}$ that admits an almost-complex structure.

\end{proposition}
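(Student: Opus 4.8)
The first assertion I would obtain exactly as the advertised ``immediate corollary.'' The plan is to start from a symplectic $4$-manifold $M$ carrying $k$ pairwise disjoint, homologically essential Lagrangian tori $T_1,\dots,T_k$ of self-intersection zero, all in the same homology class; the study of nonisotopic symplectic submanifolds in \cite{[FS2], [V1]} supplies such configurations (placing the $k$ knot-surgery tori in disjoint fibre neighbourhoods), and they may even be taken pairwise nonisotopic. I would then perturb the symplectic form as in \cite[Lemma 1.6]{[Go]} so that each $T_j$ becomes symplectic, and perform a $(p,q,0)$-torus surgery on each. By \cite{[CG1]} the symplectic structure on the complement of $\nu(T_j)$ extends over the reglued neighbourhood to a generalized complex structure whose type jumps from $0$ to $2$ along the core torus $\hat T_j$; since the $T_j$ are disjoint, the $k$ surgeries are independent and the outcome is a closed $4$-manifold equipped with $\mathcal J_k$ having the $k$ distinct loci $\hat T_1,\dots,\hat T_k$. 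This proves the first sentence with no control on the diffeomorphism type.

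For the ``moreover'' I would keep the diffeomorphism type under control, following the recipe of \cite{[T]} with one modification. First I would record the constraint: a generalized complex structure makes the underlying manifold almost complex, so the only connected sums $a\,\mathbb{CP}^2\#\,b\,\overline{\mathbb{CP}^2}$ that can occur are those admitting an almost complex structure (equivalently $a$ odd), which is precisely the stated hypothesis. To realize them, I would recall from \cite{[T], [FPS], [BK]} that each such standard connected sum is produced from a symplectic model with $b_2^+=a$ by a prescribed sequence of torus surgeries, together with the $\pi_1$-bookkeeping that pins down the smooth type via the classification of the resulting simply connected manifold. The tweak is to replace $k$ of the Luttinger ($r=1$) surgeries in that sequence by the $r=0$ surgery of \cite{[CG1]}: each replacement deposits a type change locus and is compatible with landing on the (non-symplectic, Seiberg--Witten trivial) standard connected sum, while leaving the remaining $\pi_1$ and handle computations of \cite{[T]} intact. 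For the elliptic surfaces I would take $M=E(n)$ and let the $T_j$ be $k$ parallel copies of the generic fibre; choosing $(p,q)=(1,0)$ or $(0,1)$ makes each surgery a multiplicity-one modification supported in a fibre neighbourhood, which---arguing through the vanishing cycles of an adjacent cusp fibre as in \cite{[T]}---returns a manifold diffeomorphic to $E(n)$, so that $E(n)$ itself carries $\mathcal J_k$. Where $\overline{\mathbb{CP}^2}$ summands are still needed, the generalized complex blow-up of \cite{[CG2]} already used in Section \ref{Section 3} adds them without disturbing the loci created so far.

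Finally, invoking \cite{[FS2], [V1]} once more, I would note that the tori $T_j$, and hence the loci $\hat T_j$ after surgery, may be taken pairwise nonisotopic, which reconciles this construction with the homological discussion of Section \ref{Section 3} and with the overlapping claims of \cite{[T1], [GH]}.

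The step I expect to be the genuine obstacle is the smooth identification in the ``moreover'' part: verifying that the surgered manifold is the \emph{standard} $E(n)$, respectively the \emph{standard} $a\,\mathbb{CP}^2\#\,b\,\overline{\mathbb{CP}^2}$, rather than a merely homeomorphic and possibly exotic model, and checking that the almost-complex necessary condition is in fact sufficient across the entire range. The generalized complex input of \cite{[CG1], [CG2]} is, by comparison, applied as a black box $k$ times; all the real work sits in adapting the Kirby-calculus and fundamental-group computations of \cite{[T], [FPS], [BK]}.
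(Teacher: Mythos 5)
Your first paragraph reproduces, in essence, the paper's proof of the first claim: take $k$ of the disjoint Fintushel--Stern/Vidussi tori in $E(n)$ and apply simultaneous $(1,0,0)$-surgeries, invoking \cite{[CG1]} (the paper uses these tori directly as symplectic submanifolds, so no perturbation is needed and they are not Lagrangian, but this is cosmetic). The genuine gap is in your treatment of $E(n)$ in the ``moreover'' part. In the conventions of Section \ref{Section 3}, the multiplicity of the log transform associated to a $(p,q,r)$-torus surgery is $r$, the coefficient of the meridian $\mu_T$, not $p$ or $q$; the surgery that produces a type change locus via \cite{[CG1]} must have $r=0$, so it is a multiplicity-\emph{zero} log transform. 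Your claim that choosing $(p,q)=(1,0)$ or $(0,1)$ ``makes each surgery a multiplicity-one modification'' conflates the two, and the distinction is fatal here: a multiplicity-one log transform on a fiber adjacent to a cusp does return $E(n)$, but it yields a symplectic manifold with no type change locus, whereas a multiplicity-zero log transform on a fiber creates a locus but destroys the elliptic surface. Indeed, by \cite[Section 3]{[G2]} and the appendix of \cite{[CG2]} --- both cited in the paper's own proof --- a single $(1,0,0)$-surgery on a fiber decomposes $E(n)$ into $(2n-1)\mathbb{CP}^2\#(10n-1)\overline{\mathbb{CP}^2}$. Hence your $k$ surgeries on parallel copies of the fiber cannot land back on $E(n)$, and the vanishing-cycle argument you invoke simply does not apply when $r=0$.

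The paper circumvents exactly this issue by never performing the $r=0$ surgeries on fibers. It forms the fiber sum $\widetilde{X(n,h)}=E(n)\#_{F_2=T}(T^2\times\Sigma_h)$ along a fiber $F_2$ and $T=T^2\times\{pt\}$, and performs the $k$ $(1,0,0)$-surgeries (after the perturbation of \cite[Lemma 1.6]{[Go]}) together with the remaining $(1,0,1)$-Luttinger surgeries on Lagrangian tori inside the $T^2\times\Sigma_h$ block. By the Kirby calculus argument of Proposition \ref{Proposition D}, these surgeries turn $T^2\times\Sigma_h$ into $T^2\times S^2$, and the fiber sum of $E(n)$ with $T^2\times S^2$ along $T^2\times\{pt\}$ preserves the diffeomorphism type \cite[Remark 1.3]{[U]}; thus the output is the standard $E(n)$ carrying $k$ loci. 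The connected sums are then obtained by one further $(1,0,0)$-surgery on the unused fiber $F_1$ --- precisely the fiber surgery you proposed, whose actual effect is the decomposition into $(2n-1)\mathbb{CP}^2\#(10n-1)\overline{\mathbb{CP}^2}$ --- followed by blowing up/down branes as in \cite[Section 3]{[CG2]}. Your alternative route through the reverse-engineering sequences of \cite{[FPS], [BK]} suffers from a related defect: those sequences are designed to produce \emph{exotic} copies of small connected sums, so replacing Luttinger surgeries by $r=0$ surgeries leaves the identification with the standard smooth structure --- the very step you flag as the obstacle --- unaddressed.
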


\begin{proof} Let $E(n)$ be an elliptic surface, and let $F$ be a fiber. According to \cite[Theorem 5.2]{[FS2]}, \cite[Theorem 1.1]{[V1]} there exists an infinite set of pairwise nonisotopic symplectic 2-tori $\{T_j: j\in \N\}$, which are embedded in $E(n)$ that represent the homology class $2d[F]$. These are disjoint tori of self-intersection zero. The proposition follows from \cite{[CG1]} by simultaneously applying $(1, 0, 0)$-torus surgery on $k$ of these tori for $k\in \N$.

The proof the second claim follows from coupling the results in \cite{[CG2]} with tweaks of \cite[Section 4]{[T]} as we know explain. Consider two fibers $F_1, F_2 \subset E(n)$, and the product of a torus and a surface of genus $h$ equipped with the product symplectic form $(T^2\times \Sigma_h, \pi^{\ast}\omega_{T^2} \oplus \pi^{\ast}\omega_{\Sigma_h})$. Denote by $T:= T^2\times \{pt\} \subset T^2\times \Sigma_h$. This is a symplectic submanifold of self-intersection zero. Construct the fiber sum along $F_2$ and $T$\begin{equation}
\widetilde{X(n, h)}:= E(n) \#_{F_2 = T} T^2\times \Sigma_h,
\end{equation}

which admits a symplectic structure \cite[Theorem 1.3]{[Go]}. The symplectic manifold $\widetilde{X(n, h)}$ contains enough homologically essential Lagrangian tori such that each one carries a surgery loops that is a generator of the fundamental group $\pi_1(\widetilde{X(n, h)}) \cong \pi_1(\Sigma_h)$. A rigorous description of the surgery tori can be found in Lemma 31 of the arXiv version of \cite{[T]}: these computations are nothing more than an extension of the computations done in \cite{[BK]} and \cite[Section 3.3]{[BK2]}. 

As in the previous section, the genus of $\Sigma_h$ is a function on the number of type change loci one wants to construct, and one applies simulstaneously $k$ $(1, 0, 0)$-torus surgeries \cite{[CG1]} after applying \cite[Lemma 1.6]{[Go]}, and the remaining $(1, 0, 1)$-torus surgeries \cite{[Lu], [ADK]}. Let $X(n, h)$ be the simply connected manifold obtained after applying all the torus surgeries. It admits a generalized complex structure with $k$ type change loci by \cite{[CG1]}. The existence of a diffeomorphism between $X(n, h)$ and $E(n)$ can be argued as follows. Deconstruct the construction into a) the torus surgeries applied to the block $T^2\times \Sigma_h$ and b) the fiber sum used to construct $\widetilde{X(n, h)}$. The argument used to prove Proposition \ref{Proposition D} shows that the surgeries transform $T^2\times \Sigma_h$ into $T^2\times S^2$, and the fiber sum $E(n)$ and $T^2\times S^2$ along a fiber and $T^2\times \{pt\}\subset T^2\times S^2$ preserves the diffeomorphism class \cite[Remark 1.3]{[U]}. Thus, $X(n, h)$ is diffeomorphic to $E(n)$.
The claim regarding the almost-complex connected sums $m_1\mathbb{CP}^2 \# m_2 \overline{\mathbb{CP}^2}$ ($m_1$ odd and $m_2$ a nonnegative integer number) now follows verbatim as \cite[Example 5.3]{[CG2]} by applying a $(1, 0, 0)$-torus surgery to the unused fiber $F_1\subset E(n)$. It is proven in \cite[Section 3]{[G2]} and \cite[Appendix]{[CG2]} that this last surgery decomposes $E(n)$ into $(2n - 1)\mathbb{CP}^2\# (10n - 1)\overline{\mathbb{CP}^2}$. The claim follows by blowing up/blowing down branes that intersect the complex loci exactly as it is done in \cite[Section 3]{[CG2]}. 
\end{proof}

The previous argument gives a method to produce generalized complex structures with arbitrarily many type change loci on any symplectic 4-manifold that contains an embedded 2-torus, which is either symplectic or  homologically essential Lagrangian torus. The precise statement is as follows.

\begin{proposition} Every symplectic 4-manifold that contains a homologically essential 2-torus of self-intersection zero admits a generalized complex structure $\mathcal{J}_n$ with $n\in \N$ type change loci.

\end{proposition}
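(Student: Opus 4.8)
The plan is to reduce the general statement to the concrete model computation already carried out in the proof of Proposition \ref{Proposition OV}. Let $(M, \omega)$ be a symplectic $4$-manifold containing a homologically essential $2$-torus $T$ of self-intersection zero. The torus $T$ is either symplectic or Lagrangian; if it is Lagrangian, I would invoke \cite[Lemma 1.6]{[Go]} to perturb $\omega$ so that $T$ becomes symplectic, so without loss of generality $T$ is a symplectic submanifold. Since $T$ has trivial self-intersection, its tubular neighborhood is the product $\nu(T)\cong T^2\times D^2$, which is precisely the setting in which the $(p,q,r)$-torus surgeries of Section \ref{Section 3} are defined.

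The key idea is that a single homologically essential torus already furnishes arbitrarily many disjoint surgery tori, by passing to a fiber sum with a high-genus auxiliary block. First I would form the symplectic fiber sum
\begin{equation}
\widetilde{M}(h) := M \#_{T = T'} T^2\times \Sigma_h,
\end{equation}
where $T' := T^2\times\{pt\}\subset T^2\times\Sigma_h$ is the symplectic torus of self-intersection zero, and the fiber sum is symplectic by \cite[Theorem 1.3]{[Go]}. Exactly as in the second half of the proof of Proposition \ref{Proposition OV}, the block $T^2\times\Sigma_h$ contributes $2h$ homologically essential Lagrangian tori, each carrying a surgery loop that is one of the generators $a_1, b_1, \ldots, a_h, b_h$; a rigorous description of these tori appears in the arXiv version of \cite{[T]} and rests on the computations of \cite{[BK]}. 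Taking $h$ large enough relative to $n$, I would then apply $n$ simultaneous $(1,0,0)$-torus surgeries \cite{[CG1]} (after perturbing via \cite[Lemma 1.6]{[Go]} so that the chosen tori are symplectic) together with the remaining $(1,0,1)$-torus surgeries \cite{[Lu], [ADK]} to the auxiliary tori. The resulting manifold carries a generalized complex structure with exactly $n$ type change loci by \cite{[CG1]}.

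It remains to identify the diffeomorphism type, so as to land back on the original manifold $M$ rather than on $\widetilde{M}(h)$. Here I would reuse the Kirby-calculus argument of Proposition \ref{Proposition D}: the surgeries applied to the $T^2\times\Sigma_h$ block transform it into $T^2\times S^2$, and the fiber sum of $M$ with $T^2\times S^2$ along the torus $T$ and $T^2\times\{pt\}\subset T^2\times S^2$ preserves the diffeomorphism class by \cite[Remark 1.3]{[U]}. Consequently the underlying smooth manifold is diffeomorphic to $M$ itself, and $M$ admits the desired generalized complex structure $\mathcal{J}_n$ with $n$ type change loci.

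The main obstacle I anticipate is ensuring that the surgery tori in the fiber-summed manifold really are disjoint, homologically essential, and simultaneously symplectizable after a single perturbation of the symplectic form; the Gompf perturbation \cite[Lemma 1.6]{[Go]} must be arranged to make all chosen tori symplectic at once without disturbing the others, and the order in which the $(1,0,1)$- and $(1,0,0)$-surgeries are performed matters (one applies the symplectic $r=1$ surgeries first, then re-perturbs and applies the type-changing $r=0$ surgeries, as in Section \ref{Section 4.1}). Verifying that the fiber sum neutralizes the genus contribution so that the net smooth type is unchanged is the second delicate point, but this is precisely what \cite[Remark 1.3]{[U]} and the handle-slide argument of Proposition \ref{Proposition D} are designed to handle.
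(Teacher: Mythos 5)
Your proposal is correct and follows essentially the same route as the paper: the paper's own ``proof'' of this proposition is just the remark that ``the previous argument gives a method,'' i.e.\ the proof of Proposition \ref{Proposition OV} with $E(n)$ replaced by the given manifold $M$, which is exactly what you carry out (perturb $T$ to be symplectic via \cite[Lemma 1.6]{[Go]}, fiber sum with $T^2\times\Sigma_h$, perform the $(1,0,1)$- and $(1,0,0)$-torus surgeries on the Lagrangian tori of the $\Sigma_h$ block, and undo the fiber sum smoothly via \cite[Remark 1.3]{[U]} since the surgered block is $T^2\times S^2$). Your write-up in fact supplies the details the paper leaves implicit, including the correct ordering of the $r=1$ and $r=0$ surgeries and the role of homological essentialness in the Lagrangian case.
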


\begin{remark}{\label{Remark 1}} The reader will benefit from comparing these constructions with those in \cite[Theorems 3.1 and 4.7]{[GH]}. The results \cite[Theorem 3.1, Theorem 4.7]{[GH]} of Goto and Hayano can be invoked to prove Theorem \ref{Theorem 1} and Proposition \ref{Proposition OV}. For example, items in Theorem \ref{Theorem 1} were constructed using multiplicity 0 \cite{[CG1]} and 1 \cite{[Lu], [ADK]} log transforms, and can also be constructed using log transforms of more general multiplicity as in \cite{[GH]}. As it was informed by the first author to Goto and Hayano after learning about their preprint, their results on their first posting on the arXiv can also be used to construct generalized complex structures with arbitrarily many type change loci on a myriad of manifolds by building on \cite{[CG2]}, including the almost-complex connected sums $m_1\mathbb{CP}^2\# m_2\overline{\mathbb{CP}^2}$, as well as every manifold considered in \cite{[CG1], [CG2], [T]}.

There is a clear advantage of \cite{[GH]} over the tools considered in this preprint, and we point out two instances to exemplify it. Modulo the existence of a generalized complex structure, the procedure of Section \ref{Section 3} produces many more 3-manifolds as in the statement of Theorem \ref{Theorem 1} by applying more general $(p, q, r)$-torus surgeries, including copies of the Lens spaces $L(p, q)$ in the connected sum. Using \cite{[GH]} one can conclude the existence of a generalized complex structure with arbitrarily many type change loci in these manifolds as well. Second, Goto and Hayano's technology equips the almost-complex connected sums $m_2(S^2\times S^2)$ with generalized complex structures with arbitrarily many type change loci, thus extending \cite[Theorem 9]{[T]}. 
\end{remark}

\end{document}